\newtheorem{theorem}{Theorem}
\newtheorem{proposition}{Proposition}
\newtheorem{definition}{Definition}
\DeclareMathOperator{\im}{im}
\newcommand{\bmu}{\mathbf{u}}
\newcommand{\bmx}{\mathbf{x}}
\newcommand{\bmw}{\mathbf{w}}
\newcommand{\bmy}{\mathbf{y}}
\title{\LARGE \bf
A Matrix Finsler's Lemma with Applications to Data-Driven Control
}
\author{Henk J. van Waarde and M. Kanat Camlibel% <-this % stops a space
\thanks{Henk van Waarde is with the Department of Engineering, University of Cambridge, Trumpington Street, Cambridge CB2 1PZ, UK. Kanat Camlibel is with the Bernoulli Institute for Mathematics, Computer Science, and Artificial Intelligence, University of Groningen, Nij\-enborgh 9, 9747 AG, Groningen, The Netherlands. Email: {\tt\small hv280@cam.ac.uk,m.k.camlibel@rug.nl}.
}%
}
\begin{document}

\maketitle
\thispagestyle{empty}
\pagestyle{empty}

%%%%%%%%%%%%%%%%%%%%%%%%%%%%%%%%%%%%%%%%%%%%%%%%%%%%%%%%%%%%%%%%%%%%%%%%%%%%%%%%%%
\begin{abstract}
In a recent paper it was shown how a matrix S-lemma can be applied to construct controllers from noisy data. The current paper complements these results by proving a matrix version of the classical Finsler's lemma. This matrix Finsler's lemma provides a tractable condition under which all matrix solutions to a quadratic equality also satisfy a quadratic inequality. We will apply this result to bridge known data-driven control design techniques for both exact and noisy data, thereby revealing a more general theory. The result is also applied to data-driven control of Lur'e systems. 
\end{abstract}

%%%%%%%%%%%%%%%%%%%%%%%%%%%%%%%%%%%%%%%%%%%%%%%%%%%%%%%%%%%%%%%%%%%%%%%%%%%%%%%%%%
\section{Introduction}

Data-driven control refers to all approaches that use measured data as starting point in the control design. This design can be done either indirectly via model identification, or by directly mapping data to control policies. Both paradigms have a long history, but data-driven control has recently witnessed a renewed surge of interest, partly because of the widespread availability of data and the successes of machine learning algorithms. We mention contributions to data-driven optimal control \cite{Dean2019,Umenberger2019,Baggio2019,Coppens2020}, predictive control \cite{Coulson2020,Berberich2020c,Hewing2020,Yin2020b} and robust tracking control \cite{Xu2021}, nonlinear control \cite{Tabuada2020,Bisoffi2020,Alsalti2021} and system level synthesis \cite{Xue2020,Lian2021}.

Several recent papers aim at deriving tractable data-based linear matrix inequalities (LMI's) that enable direct data-driven control design. The paper \cite{Dai2018} proposes a semidefinite programming relaxation for the stabilization of switched systems. The authors of \cite{DePersis2020} provide a data-based parameterization of controllers, which is applied to stabilization and optimal control problems. In \cite{vanWaarde2020}, notions of informative data are defined, which leads to necessary and sufficient data-based conditions for different analysis and control problems. The paper \cite{Berberich2020} considers a noise bound in terms of a quadratic matrix inequality and proposes LMI conditions for control with guaranteed stability and performance. Combining data with prior knowledge on the system dynamics has been studied in \cite{Berberich2020d}. Also the problem of data-based verification of dissipativity properties has been cast as an LMI problem in \cite{Koch2020}.

An important question in this line of work regards the conservatism of the proposed LMI conditions. In this direction, a state-of-the-art result is the matrix generalization \cite{vanWaarde2020d} of the classical S-lemma \cite{Yakubovich1977}. This result provides an LMI condition under which all matrix solutions to one quadratic matrix inequality (QMI) also satisfy another QMI. The first inequality is motivated by the data: a quadratic bound on the noise, used in \cite{DePersis2020,Berberich2020,vanWaarde2020d}, has the consequence that all systems explaining the data satisfy a QMI. The second inequality captures design specifications such as stability or $\mathcal{H}_2$/$\mathcal{H}_{\infty}$ performance. Based on the matrix S-lemma, necessary and sufficient conditions could be provided for data-driven control with guaranteed quadratic stability and performance \cite{vanWaarde2020d}. 

A curious observation is that the stabilization result based on the S-lemma does not fully recover the stabilization result of \cite{vanWaarde2020} for \emph{noise-free} data. The reason is that in this case the Slater condition that is required for the matrix S-lemma does not hold. This fact is somewhat unsatisfactory because control using noise-free data should intuitively always be a special case of that for noisy data (with bound zero). 

In this paper we resolve this issue by introducing a matrix version of Finsler's lemma \cite{Finsler1936}. The classical Finsler's lemma provides an LMI condition under which a quadratic inequality is the consequence of a quadratic \emph{equality}. We will explain the difficulties in generalizing this result to matrix variables. Then, as our main contribution we will provide a Finsler's lemma for matrix variables in case the involved matrices obey some special structure. This matrix Finsler's lemma is then applied to data-driven stabilization. Interestingly, we will see that the LMI condition of \cite{vanWaarde2020d} is also necessary and sufficient in the special case of noise-free data, a result that could not be concluded from the matrix S-lemma \cite{vanWaarde2020d}. We believe that the matrix Finsler's lemma will also find other applications in situations where a QMI is the consequence of a matrix equality. In this paper, we will study one more of such situations, namely the construction of absolutely stabilizing controllers of Lur'e systems.

\emph{Outline}: In Section~\ref{sec:recap} we recap data-driven stabilization results and state the problem. Section~\ref{sec:Finsler} contains our results on the matrix Finsler's lemma. In Section~\ref{sec:bridge} this result is applied to bridge the results for noiseless \cite{vanWaarde2020} and noisy data \cite{vanWaarde2020d}. Finally, in Section~\ref{sec:Lure} we consider control of Lur'e systems. 
  
\section{Recap of data-driven stabilization and problem formulation}
\label{sec:recap}

We will first recap two data-driven stabilization results, for noise-free and noisy data, which can be found in the references \cite{vanWaarde2020,vanWaarde2020d}. Consider the system 
\begin{equation}
\label{sys}
\bmx(t+1) = A_s \bmx(t) + B_s \bmu(t) + \bmw(t),
\end{equation}
where $\bmx \in \mathbb{R}^n$ is the state, $\bmu \in \mathbb{R}^m$ is the control input and $\bmw \in \mathbb{R}^n$ denotes noise. The real matrices $A_s$ and $B_s$ are not assumed to be known. Instead of this, it is assumed that input/state data are obtained from \eqref{sys}, which are collected in the matrices
\begin{equation}
\label{XUmdata}
\begin{aligned}
X &= \begin{bmatrix}
x(0) & x(1) & \cdots & x(T) 
\end{bmatrix} \\
U_- &= \begin{bmatrix}
u(0) & u(1) & \cdots & u(T-1) 
\end{bmatrix}.
\end{aligned}
\end{equation}
We will also make use of shifted versions of the state sequence which are denoted by
\begin{equation}
\label{XpXm}
\begin{aligned}
X_- &= \begin{bmatrix}
x(0) & x(1) & \cdots & x(T-1)
\end{bmatrix} \\
X_+ &= \begin{bmatrix}
x(1) & x(2) & \cdots & x(T)
\end{bmatrix}.
\end{aligned}
\end{equation}

\subsection{Data-driven stabilization using exact data}
In this section we focus on the noise-free situation in which $\bmw = 0$. The purpose is to use the input/state data $(U_-,X)$ for the design of a stabilizing state feedback controller $\bmu = K \bmx$. Of course, this is only possible if the data contain sufficient information about the unknown system \eqref{sys}, i.e., if they are \emph{informative} for control design. 
\begin{definition}
\label{definfexact}
Suppose that the data $(U_-,X)$ are generated by \eqref{sys} for $\bmw = 0$. Then $(U_-,X)$ are \emph{informative for stabilization by state feedback} \cite{vanWaarde2020} if there exists a $K$ such that $A+BK$ is Schur stable for all $(A,B) \in \Sigma$, where
\begin{equation}
\label{Sigmaexact}
\Sigma := \{(A,B) \mid X_+ = AX_- + BU_- \}. 
\end{equation}
\end{definition}

\vspace{5pt}

The data are thus informative if there exists a single controller $K$ that stabilizes all systems explaining the data, i.e., all systems in $\Sigma$. 

Informativity for stabilization can be checked by solving a data-based linear matrix inequality (LMI), given in \eqref{LMIexact}. This LMI condition was proposed in \cite{DePersis2020}, and in \cite{vanWaarde2020} it was shown that it is \emph{necessary and sufficient} for informativity for stabilization. We state the result as follows.

\begin{proposition}
\label{propstabexact}
The data $(U_-,X)$ are informative for stabilization by state feedback if and only if there exists a matrix $\Theta \in \mathbb{R}^{T\times n}$ such that $X_- \Theta = (X_- \Theta)^\top$ and 
\begin{equation}
\label{LMIexact}
\begin{bmatrix}
X_- \Theta & X_+ \Theta \\ 
\Theta^\top X_+^\top & X_- \Theta 
\end{bmatrix} > 0. 
\end{equation}
Moreover, $K$ is such that $A+BK$ is stable for all $(A,B) \in \Sigma$ if and only if $K = U_-\Theta (X_- \Theta)^{-1}$ for a $\Theta$ satisfying \eqref{LMIexact}. 
\end{proposition}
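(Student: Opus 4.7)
\emph{Plan.} I will reduce the informativity condition to a standard discrete-time Lyapunov inequality that must hold uniformly over $\Sigma$, and then translate that inequality into the data-based LMI by a change of variables. The pivotal identity is that if $\Theta \in \mathbb{R}^{T\times n}$ satisfies $X_-\Theta = P$ (symmetric) and $U_-\Theta = KP$, then for every $(A,B) \in \Sigma$,
$$ X_+\Theta \,=\, (AX_- + BU_-)\Theta \,=\, AP + BKP \,=\, (A+BK)P. $$
Combined with a Schur complement on the block matrix in \eqref{LMIexact}, this shows that \eqref{LMIexact} is equivalent to $P > 0$ together with $P - (A+BK)P(A+BK)^\top > 0$, i.e.\ to Schur stability of $A+BK$. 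So the task reduces to linking such a $\Theta$ with the existence of a common stabilising $K$ over $\Sigma$.

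\emph{Sufficiency.} Given $\Theta$ satisfying the hypotheses, set $P := X_-\Theta$ and $K := U_-\Theta P^{-1}$. The $(1,1)$ block of \eqref{LMIexact} forces $P > 0$, and the identity $\begin{bmatrix} U_- \\ X_- \end{bmatrix}\Theta = \begin{bmatrix} K \\ I \end{bmatrix}P$ together with the computation above gives, for every $(A,B) \in \Sigma$, the Lyapunov certificate for $A+BK$. This proves sufficiency and also one direction of the ``moreover'' statement.

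\emph{Necessity.} Assume $K$ stabilises every $(A,B) \in \Sigma$. Fix any $(A_0,B_0) \in \Sigma$ (such a pair exists because the true system $(A_s,B_s)$ lies in $\Sigma$). Let the columns of $N = \begin{bmatrix} N_U \\ N_X \end{bmatrix} \in \mathbb{R}^{(m+n)\times r}$ span the left kernel of $\begin{bmatrix} U_- \\ X_- \end{bmatrix}$, so that
$$ \Sigma = \bigl\{(A_0 + Y N_X^\top,\, B_0 + Y N_U^\top) : Y \in \mathbb{R}^{n\times r}\bigr\}, $$
and consequently $A+BK = (A_0+B_0K) + YL$ with $L := N_X^\top + N_U^\top K$. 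Choosing $Y = \alpha L^\top$ yields $\tr(A+BK) = \tr(A_0+B_0K) + \alpha\,\tr(L^\top L)$, which is unbounded in $\alpha$ unless $L = 0$; since at least one eigenvalue of $A+BK$ must then exceed one in magnitude, Schur stability over all $Y$ forces $L = 0$. This orthogonality is equivalent to $\im \begin{bmatrix} K \\ I \end{bmatrix} \subseteq \im \begin{bmatrix} U_- \\ X_- \end{bmatrix}$, so there exists $\Theta_0$ with $\begin{bmatrix} K \\ I \end{bmatrix} = \begin{bmatrix} U_- \\ X_- \end{bmatrix}\Theta_0$. The discrete Lyapunov theorem applied to the Schur matrix $A_0+B_0K$ furnishes $P > 0$ with $P - (A_0+B_0K)P(A_0+B_0K)^\top > 0$, and $\Theta := \Theta_0 P$ then meets every requirement. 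The remaining direction of ``moreover'' is immediate: any stabilising $K$ arises in this way, so it must equal $U_-\Theta(X_-\Theta)^{-1}$.

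\emph{Main obstacle.} The only non-routine step is the rescaling argument establishing $L = 0$; the rest of the argument is a bookkeeping exercise around Schur complements and discrete Lyapunov theory. Care is needed there because it is a statement about robust stability under rank-$r$ additive perturbation, and the trace identity is the most elementary gadget I know that rules out all such perturbations at once.
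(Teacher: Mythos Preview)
The paper does not contain its own proof of this proposition: it is stated as a recap of results from \cite{DePersis2020,vanWaarde2020}, with the necessity direction attributed to \cite{vanWaarde2020}. So there is no in-paper proof to compare against; I can only assess correctness.

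Your argument is correct. The sufficiency direction is the intended one-line computation: the identity $X_+\Theta=(A+BK)P$ for every $(A,B)\in\Sigma$, combined with the Schur complement of \eqref{LMIexact}, gives the common Lyapunov inequality. For necessity, your parameterisation of $\Sigma$ via the left kernel of $\begin{bmatrix}U_-\\ X_-\end{bmatrix}$ reduces the question to showing $L:=N_X^\top+N_U^\top K=0$, and the trace trick with $Y=\alpha L^\top$ does this cleanly (if $L\neq 0$ then $|\tr(A+BK)|\to\infty$, while Schur stability forces $|\tr|<n$). This is exactly the mechanism behind \cite[Lem.~15]{vanWaarde2020}, which the present paper invokes elsewhere: it shows $A+BK$ is constant over $\Sigma$, whence a single Lyapunov matrix suffices. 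The construction $\Theta=\Theta_0 P$ then closes the loop, and both directions of the ``moreover'' clause fall out of the two halves you have already written.

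One cosmetic remark: in the trace step you might state explicitly that Schur stability bounds $|\tr(A+BK)|$ by $n$, since the phrase ``at least one eigenvalue must then exceed one in magnitude'' leaves the reader to supply this. Otherwise the write-up is complete.
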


\subsection{Data-driven stabilization using noisy data}

Next, we consider the system \eqref{sys} where $\bmw$ is not necessarily zero. The experimental input/state data are denoted by $(U_-,X)$, as before. This time, we also denote the noise samples during an experiment by 
$$
W_- = \begin{bmatrix}
w(0) & w(1) & \cdots & w(T-1)
\end{bmatrix}.
$$ 
Of course, the matrix $W_-$ is not known, but is assumed to bounded as 
\begin{equation}
    \label{asnoise}
    \begin{bmatrix}
    I \\ W_-^\top 
    \end{bmatrix}^\top 
    \begin{bmatrix}
    \Phi_{11} & \Phi_{12} \\
    \Phi_{12}^\top & \Phi_{22}
    \end{bmatrix}
    \begin{bmatrix}
    I \\ W_-^\top 
    \end{bmatrix} \geq 0,
\end{equation}
for known $\Phi_{11} = \Phi_{11}^\top$, $\Phi_{12}$ and $\Phi_{22} = \Phi_{22}^\top < 0$. This noise model was first introduced in \cite{vanWaarde2020d}. It can be interpreted as the transposed (or dual) model as the one used in \cite{Berberich2020}. The inequality \eqref{asnoise} has the interpretation that the energy of $\bmw$ is bounded on the finite time interval $[0,T-1]$.

Given the noise model \eqref{asnoise}, the set of all systems explaining the data is given by all $(A,B)$ such that
\begin{equation}
\label{eqXp}
X_+ = AX_-+BU_-+W_-
\end{equation}
is satisfied for some realization $W_-$ of the noise, that is,
$$
\Sigma_\Phi := \{(A,B) \mid \eqref{eqXp} \text{ holds for some } W_- \text{ satisfying } \eqref{asnoise} \}.
$$

With this in mind, we recall the following notion of informative data for stabilization using noisy data \cite{vanWaarde2020d}. 
\begin{definition}
\label{definformativity}
Suppose that the data $(U_-,X)$ are generated by \eqref{sys} for some noise sequence $W_-$ satisfying \eqref{asnoise}. Then $(U_-,X)$ are called \emph{informative for quadratic stabilization} if there exists a feedback gain $K$ and a matrix $P = P^\top > 0$ such that
\begin{equation}
    \label{lyapunovineq}
P - (A+BK) P (A+BK)^\top > 0
\end{equation}
for all $(A,B) \in \Sigma_\Phi$.
\end{definition}

Note that we focus on stabilization with a \emph{common} Lyapunov matrix $P$. 

A necessary and sufficient condition for informativity for quadratic stabilization was given in \cite{vanWaarde2020d}. The main concept that was used in that paper was a matrix version of the classical S-lemma. In fact, \cite{vanWaarde2020d} introduced both a non-strict and a strict version of this matrix S-lemma, both of which we recall in the following propositions. 

\begin{figure*}[t!]
\normalsize
%\vspace*{4pt}
%\hrulefill
\begin{equation}
\begin{bmatrix}
    P-\beta I & 0 & 0 & 0 \\
    0 & -P & -L^\top & 0 \\
    0 & -L & 0 & L \\
    0 & 0 & L^\top & P
    \end{bmatrix} - \begin{bmatrix}
    I & X_+ \\ 0 & -X_- \\ 0 & -U_- \\ 0 & 0
    \end{bmatrix}
    \begin{bmatrix}
    \Phi_{11} & \Phi_{12} \\
    \Phi_{12}^\top & \Phi_{22}
    \end{bmatrix}
    \begin{bmatrix}
    I & X_+ \\ 0 & -X_- \\ 0 & -U_- \\ 0 & 0
    \end{bmatrix}^\top \geq 0. \label{LMIstab}\tag{FS}
\end{equation}
\hrulefill
\vspace*{4pt}
\end{figure*}

\begin{proposition}[Matrix S-lemma]
\label{matSlemma}
Consider the symmetric matrices $M,N \in \mathbb{R}^{(k+\ell) \times (k+\ell)}$ and assume that there exists some matrix $\bar{Z} \in \mathbb{R}^{\ell \times k}$ such that
\begin{equation}
    \label{matSlater}
\begin{bmatrix} I \\ \bar{Z} \end{bmatrix}^\top N \begin{bmatrix} I \\ \bar{Z} \end{bmatrix} > 0.
\end{equation}
Then we have that 
    $$
    \begin{bmatrix} I \\ Z \end{bmatrix}^\top M \begin{bmatrix} I \\ Z \end{bmatrix} \geq 0 \:\: \forall  Z \in \mathbb{R}^{\ell \times k} \text{ such that} \begin{bmatrix} I \\ Z \end{bmatrix}^\top N \begin{bmatrix} I \\ Z \end{bmatrix} \geq 0
    $$
if and only if there exists a scalar $\alpha \!\geq\! 0$ such that $M \!- \alpha N \!\geq\! 0$.
\end{proposition}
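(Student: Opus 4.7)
The sufficiency direction is a one-line calculation: if $M - \alpha N \geq 0$ for some $\alpha \geq 0$, then
$$\begin{bmatrix} I \\ Z \end{bmatrix}^\top M \begin{bmatrix} I \\ Z \end{bmatrix} - \alpha \begin{bmatrix} I \\ Z \end{bmatrix}^\top N \begin{bmatrix} I \\ Z \end{bmatrix} = \begin{bmatrix} I \\ Z \end{bmatrix}^\top (M - \alpha N) \begin{bmatrix} I \\ Z \end{bmatrix} \geq 0,$$
so whenever the right quadratic form in $N$ is nonnegative, the one in $M$ inherits nonnegativity. No hypothesis on $\bar Z$ is needed for this direction.

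For the necessity direction, my plan is a contrapositive argument built on convex separation in the space $\mathbb{S}^{k+\ell}$ of symmetric matrices. Assume no $\alpha \geq 0$ makes $M - \alpha N \geq 0$; then $M$ lies outside the closed convex cone $\mathcal{C} := \{S + \alpha N : S \geq 0,\ \alpha \geq 0\}$. A separating linear functional, realized by some symmetric $\Pi$ via the trace pairing, produces $\tr(\Pi M) < 0$ together with $\tr(\Pi M') \geq 0$ for every $M' \in \mathcal{C}$. Specializing $M' = S \geq 0$ forces $\Pi \geq 0$, and specializing $M' = \alpha N$ with $\alpha \geq 0$ forces $\tr(\Pi N) \geq 0$. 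So the separator $\Pi$ comes with structure: PSD, with nonnegative trace against $N$ and strictly negative trace against $M$.

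The heart of the proof — and the main obstacle — is to convert this matricial separator $\Pi$ into an explicit $Z$ that falsifies the premise. Partition $\Pi = \begin{bmatrix} \Pi_{11} & \Pi_{12} \\ \Pi_{12}^\top & \Pi_{22} \end{bmatrix}$. In the ideal case that $\Pi_{11}$ is invertible and the Schur complement $\Pi_{22} - \Pi_{12}^\top \Pi_{11}^{-1} \Pi_{12}$ vanishes, choosing $Z = \Pi_{12}^\top \Pi_{11}^{-1}$ gives the factorization $\Pi = \begin{bmatrix} I \\ Z \end{bmatrix} \Pi_{11} \begin{bmatrix} I \\ Z \end{bmatrix}^\top$, so that $\tr(\Pi X) = \tr\!\bigl(\Pi_{11} \begin{bmatrix} I \\ Z \end{bmatrix}^\top X \begin{bmatrix} I \\ Z \end{bmatrix}\bigr)$ for every $X$; the separation inequalities then pull back directly to $\begin{bmatrix} I \\ Z \end{bmatrix}^\top N \begin{bmatrix} I \\ Z \end{bmatrix} \geq 0$ and $\begin{bmatrix} I \\ Z \end{bmatrix}^\top M \begin{bmatrix} I \\ Z \end{bmatrix} \not\geq 0$, contradicting the premise.

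Reducing a generic $\Pi$ to this benign factorized form is the technical crux, and this is exactly where the Slater hypothesis enters. The Slater matrix $\bar Z$ supplies the rank-one lift $\begin{bmatrix} I \\ \bar Z \end{bmatrix}\begin{bmatrix} I \\ \bar Z \end{bmatrix}^\top$, which is PSD and, crucially, has \emph{strictly} positive trace against $N$. Adding a small positive multiple of this lift to $\Pi$ keeps $\Pi \geq 0$, makes the $(1,1)$ block invertible, preserves $\tr(\Pi N) \geq 0$ (using the strict Slater slack), and preserves $\tr(\Pi M) < 0$ for small enough perturbation. A careful limiting argument — combined with a rank decomposition of the PSD separator that aligns its column space with the $Z$-parametrized subspace — then produces the witness $Z$. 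This perturbation/limit step is where the whole argument lives or dies, and it is exactly the step that cannot go through without the Slater condition, explaining why the strict-feasibility assumption is indispensable.
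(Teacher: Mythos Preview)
The paper does not prove this proposition; it is recalled from \cite{vanWaarde2020d} without proof, so there is no in-paper argument to compare against. I will therefore evaluate your sketch on its own merits.

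The sufficiency direction is fine. The separation setup for necessity is also correct (and note that closedness of $\mathcal{C}=\{S+\alpha N:S\geq 0,\ \alpha\geq 0\}$ genuinely uses the Slater hypothesis: if $\alpha_k\to\infty$ along a convergent sequence then $-N$ would be a limit of PSD matrices, contradicting strict feasibility). The real problem is the step you flag as ``ideal'' and then try to reach by perturbation. Even when $\Pi$ factors exactly as $\Pi=\begin{bmatrix} I\\ Z\end{bmatrix}\Pi_{11}\begin{bmatrix} I\\ Z\end{bmatrix}^\top$ with $\Pi_{11}>0$, the inequality $\tr(\Pi N)\geq 0$ only yields
\[
\tr\!\left(\Pi_{11}\,\begin{bmatrix} I\\ Z\end{bmatrix}^\top N \begin{bmatrix} I\\ Z\end{bmatrix}\right)\geq 0,
\]
and a nonnegative $\Pi_{11}$-weighted trace does \emph{not} force $\begin{bmatrix} I\\ Z\end{bmatrix}^\top N \begin{bmatrix} I\\ Z\end{bmatrix}\geq 0$. (Take $\Pi_{11}=I$ and any indefinite matrix with nonnegative trace.) Thus the ``pull back directly'' claim fails precisely for the $N$-constraint, which is the one you must certify as a \emph{matrix} inequality to place $Z$ in the feasible set. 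The $M$-side is unaffected: $\tr(\Pi_{11}\cdot)<0$ with $\Pi_{11}\geq 0$ does rule out PSD. Your subsequent perturbation and rank-decomposition remarks do not address this asymmetry; splitting $\Pi$ into rank-one pieces produces vectors $p_i$ with $\sum_i p_i^\top N p_i\geq 0$ and $\sum_i p_i^\top M p_i<0$, but neither isolates a single $p_i$ with both signs correct, nor does a single vector witness deliver the full matrix inequality in $N$.

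The argument in \cite{vanWaarde2020d} avoids this by reducing the matrix statement to the classical (vector) S-lemma rather than by convex separation in $\mathbb{S}^{k+\ell}$. The Slater point $\bar Z$ is used constructively: given any vector $\begin{bmatrix} v\\ w\end{bmatrix}$ with $v\neq 0$ and nonnegative $N$-quadratic form, one builds a matrix $Z$ that sends $v\mapsto w$ and agrees with $\bar Z$ on a complement, and the strict slack at $\bar Z$ is what makes $\begin{bmatrix} I\\ Z\end{bmatrix}^\top N\begin{bmatrix} I\\ Z\end{bmatrix}\geq 0$ hold as a matrix. That interpolation step is the missing idea in your outline; without it, the trace-level separator cannot be upgraded to a $Z$-witness satisfying the matrix constraint.
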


\vspace{5pt}

\begin{proposition}[Strict matrix S-lemma]
\label{strictmatSlemma2}
Consider symme\-tric matrices $M,N \in \mathbb{R}^{(k+\ell) \times (k+\ell)}$, partitioned as 
\begin{equation}
\label{partitionMN}
M = \begin{bmatrix}
M_{11} & M_{12} \\ M_{12}^\top & M_{22}
\end{bmatrix} \text{ and } N = \begin{bmatrix}
N_{11} & N_{12} \\ N_{12}^\top & N_{22}
\end{bmatrix}.
\end{equation}
Assume that $M_{22} \leq 0$, $N_{22} \leq 0$ and $\ker N_{22} \subseteq \ker N_{12}$. Suppose that there exists some matrix $\bar{Z} \in \mathbb{R}^{\ell \times k}$ satisfying \eqref{matSlater}. Then we have that
$$
    \begin{bmatrix} I \\ Z \end{bmatrix}^\top M \begin{bmatrix} I \\ Z \end{bmatrix} > 0 \:\: \forall  Z \in \mathbb{R}^{\ell \times k} \text{ such that} \begin{bmatrix} I \\ Z \end{bmatrix}^\top N \begin{bmatrix} I \\ Z \end{bmatrix} \geq 0
    $$
    if and only if there exist $\alpha \geq 0$ and $\beta > 0$ such that 
    $$
    M- \alpha N \geq \begin{bmatrix}
    \beta I & 0 \\ 0 & 0
    \end{bmatrix}.
    $$
\end{proposition}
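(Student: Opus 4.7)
Write $f(Z) := \begin{bmatrix}I\\Z\end{bmatrix}^\top M \begin{bmatrix}I\\Z\end{bmatrix}$ and $g(Z) := \begin{bmatrix}I\\Z\end{bmatrix}^\top N \begin{bmatrix}I\\Z\end{bmatrix}$ for brevity. The ``if'' direction is immediate: given $\alpha \geq 0$ and $\beta > 0$ with $M - \alpha N \geq \begin{bmatrix}\beta I & 0\\ 0 & 0\end{bmatrix}$, sandwiching this inequality yields $f(Z) - \alpha g(Z) \geq \beta I$, and the hypothesis $g(Z) \geq 0$ then forces $f(Z) \geq \beta I > 0$. My plan for the ``only if'' direction is to reduce the strict conclusion to the already-established Proposition~\ref{matSlemma}: if I can exhibit a single $\beta_0 > 0$ such that $f(Z) \geq \beta_0 I$ for every $Z$ with $g(Z) \geq 0$, then applying Proposition~\ref{matSlemma} to the pair $\bigl(M - \beta_0 \begin{bmatrix}I & 0\\ 0 & 0\end{bmatrix},\, N\bigr)$ (the Slater point for $N$ is untouched) supplies the required $\alpha \geq 0$, closing the proof with $\beta = \beta_0$.

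The natural route to producing $\beta_0$ is compactness, but $\{Z : g(Z) \geq 0\}$ is typically unbounded along rank-one perturbations $vw^\top$ with $v \in \ker N_{22}$. The three structural hypotheses $M_{22} \leq 0$, $N_{22} \leq 0$ and $\ker N_{22} \subseteq \ker N_{12}$ are there to quarantine exactly these ``runaway'' directions, and the first technical step is to show they are invisible to $M$ as well, i.e.\ $\ker N_{22} \subseteq \ker M_{22} \cap \ker M_{12}$. To this end I would fix the Slater point $\bar Z$, pick $v \in \ker N_{22}$ and $w \in \mathbb{R}^k$, and note that $N_{22} v = 0$ together with $N_{12} v = 0$ gives $g(\bar Z + t v w^\top) = g(\bar Z) > 0$ for every $t \in \mathbb{R}$; consequently $f(\bar Z + t v w^\top) > 0$ for all $t$. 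Expanding this expression in powers of $t$, the $t^2$-coefficient is $(v^\top M_{22} v)\, w w^\top$, which combined with $M_{22} \leq 0$ and the required positive-definiteness as $|t| \to \infty$ must vanish, so $M_{22} v = 0$. The linear-in-$t$ coefficient then collapses to $(M_{12} v) w^\top + w (M_{12} v)^\top$; requiring it to vanish for all $w$ (take $w = M_{12} v$) yields $M_{12} v = 0$.

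With this kernel containment in hand, I would perform an orthogonal change of coordinates on $\mathbb{R}^\ell$ that sends $\ker N_{22}$ to the last coordinates. In these new coordinates both $M$ and $N$ acquire a trailing zero block, so $f$ and $g$ depend only on the component $Z_1$ of $Z$ orthogonal to $\ker N_{22}$, and the ``new $N_{22}$'' is now strictly negative definite. Pairing the matrix inequality $g(Z_1) \geq 0$ with each unit vector $z_0 \in \mathbb{R}^k$ turns it into an ellipsoidal bound on $Z_1 z_0$, so the reduced feasible set is compact; continuity of $Z_1 \mapsto \lambda_{\min}(f(Z_1))$ on this set then delivers the desired positive minimum $\beta_0 > 0$. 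Applying Proposition~\ref{matSlemma} in the reduced coordinates and lifting back by padding with zeros on the kernel block produces the required $\alpha \geq 0$ with $M - \alpha N \geq \begin{bmatrix}\beta_0 I & 0\\ 0 & 0\end{bmatrix}$. The step I expect to be the main obstacle is the kernel-containment argument of the previous paragraph: it is the only place where the structural hypotheses $M_{22} \leq 0$ and $\ker N_{22} \subseteq \ker N_{12}$ are really consumed, and it is what upgrades strict pointwise positivity into the uniform lower bound that compactness plus Proposition~\ref{matSlemma} can then convert into the desired certificate.
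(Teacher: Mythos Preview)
The paper does not actually prove this proposition: it is recalled in Section~\ref{sec:recap} as background from \cite{vanWaarde2020d}, so there is no in-paper argument to compare against. That said, your proposal is sound and essentially reproduces the standard proof of the strict matrix S-lemma. The kernel-containment step is correct: with $v\in\ker N_{22}$ (hence $N_{12}v=0$) the perturbation $\bar Z+tvw^\top$ stays Slater-feasible, the $t^2$-term $(v^\top M_{22}v)\,ww^\top$ must vanish by $M_{22}\leq 0$, and then the surviving linear term $(M_{12}v)w^\top+w(M_{12}v)^\top$ forces $M_{12}v=0$ by choosing $w=M_{12}v$ and letting $t\to-\infty$. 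After that, the reduction to a negative-definite $N_{22}$ makes the constraint set compact, the uniform lower bound $\beta_0>0$ follows by continuity, and the appeal to Proposition~\ref{matSlemma} with $M$ replaced by $M-\begin{bmatrix}\beta_0 I & 0\\ 0 & 0\end{bmatrix}$ is legitimate since the Slater point for $N$ is unaffected. One minor simplification: once you have $f(Z)\geq\beta_0 I$ for all feasible $Z$ in the \emph{original} coordinates (which you do, since $f$ and $g$ ignore the $\ker N_{22}$-component of $Z$), you can invoke Proposition~\ref{matSlemma} directly there without the ``reduce, apply, lift back'' detour.
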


\vspace{5pt}

Based on the strict matrix S-lemma, the following characterization of informativity for quadratic stabilization can be established \cite{vanWaarde2020d}. For this, we define $N$ as 
\begin{equation}
\label{Nstab}
N := \begin{pmat}[{.}]
    I & X_+ \cr 0 & -X_- \cr 0 & -U_- \cr
    \end{pmat}
    \begin{bmatrix}
    \Phi_{11} & \Phi_{12} \\
    \Phi_{12}^\top & \Phi_{22}
    \end{bmatrix}
    \begin{pmat}[{.}]
    I & X_+ \cr 0 & -X_- \cr 0 & -U_- \cr
    \end{pmat}^\top.
\end{equation}

\begin{proposition}
\label{propstabnoise}
Assume that Slater condition \eqref{matSlater} holds for $N$ in \eqref{Nstab} and some $\bar{Z} \in \mathbb{R}^{(n+m)\times n}$. Then the data $(U_-,X)$ are informative for quadratic stabilization if and only if there exists an $n\times n$ matrix $P = P^\top > 0$, an $L \in \mathbb{R}^{m \times n}$ and a scalar $\beta > 0$ satisfying \eqref{LMIstab}.

Moreover, if $P$ and $L$ satisfy \eqref{LMIstab} then $K := L P^{-1}$ is a stabilizing feedback gain for all $(A,B) \in \Sigma_\Phi$.
\end{proposition}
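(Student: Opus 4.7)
The plan is to deduce the proposition from the strict matrix S-lemma (Proposition~\ref{strictmatSlemma2}) by encoding both the data-consistency condition $(A,B)\in\Sigma_\Phi$ and the Lyapunov stabilization condition as QMIs in the single variable $Z = \begin{bmatrix} A^\top \\ B^\top \end{bmatrix} \in \mathbb{R}^{(n+m)\times n}$. Substituting $W_- = X_+ - A X_- - B U_-$ into \eqref{asnoise} and using the identity
$$
\begin{bmatrix} I & X_+ \\ 0 & -X_- \\ 0 & -U_- \end{bmatrix}^{\top} \begin{bmatrix} I \\ Z \end{bmatrix} = \begin{bmatrix} I \\ W_-^\top \end{bmatrix},
$$
one sees that $(A,B)\in\Sigma_\Phi$ if and only if $\begin{bmatrix} I \\ Z \end{bmatrix}^\top N \begin{bmatrix} I \\ Z \end{bmatrix} \geq 0$ with $N$ as in \eqref{Nstab}. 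Writing $A+BK = Z^\top \begin{bmatrix} I \\ K \end{bmatrix}$, the Lyapunov inequality $P - (A+BK) P (A+BK)^\top > 0$ is equivalent to $\begin{bmatrix} I \\ Z \end{bmatrix}^\top M \begin{bmatrix} I \\ Z \end{bmatrix} > 0$ with
$$
M := \begin{bmatrix} P & 0 & 0 \\ 0 & -P & -PK^\top \\ 0 & -KP & -KPK^\top \end{bmatrix}.
$$

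Next I would verify the hypotheses of Proposition~\ref{strictmatSlemma2}. The Slater condition is assumed in the statement; $M_{22} \leq 0$ follows from $P > 0$ since the lower-right block equals $-\begin{bmatrix} I \\ K \end{bmatrix} P \begin{bmatrix} I & K^\top \end{bmatrix}$; and using $\Phi_{22} < 0$ one obtains $N_{22} = \begin{bmatrix} X_- \\ U_- \end{bmatrix} \Phi_{22} \begin{bmatrix} X_-^\top & U_-^\top \end{bmatrix} \leq 0$ with $\ker N_{22} = \ker \begin{bmatrix} X_-^\top & U_-^\top \end{bmatrix}$, which is contained in $\ker N_{12}$ because $N_{12}$ is obtained by left-multiplying $\begin{bmatrix} X_-^\top & U_-^\top \end{bmatrix}$ by $-(\Phi_{12} + X_+ \Phi_{22})$. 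Hence Proposition~\ref{strictmatSlemma2} applies, yielding that informativity for quadratic stabilization is equivalent to the existence of $\alpha \geq 0$, $\beta > 0$, $P = P^\top > 0$ and $K$ satisfying $M - \alpha N \geq \begin{bmatrix} \beta I & 0 \\ 0 & 0 \end{bmatrix}$.

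Two remaining steps convert this condition into \eqref{LMIstab}. First, rule out $\alpha = 0$: this would require $M_{22} \geq 0$, which combined with $M_{22} \leq 0$ would force $\begin{bmatrix} I \\ K \end{bmatrix} P \begin{bmatrix} I & K^\top \end{bmatrix} = 0$, contradicting $P > 0$. For $\alpha > 0$, the homogeneous rescaling $(P, KP, \beta) \mapsto (P/\alpha, KP/\alpha, \beta/\alpha)$ normalises $\alpha = 1$ while preserving $P > 0$ and $\beta > 0$. Second, introducing $L := KP$, the lower-right block of $M$ becomes $\begin{bmatrix} -P & -L^\top \\ -L & -LP^{-1}L^\top \end{bmatrix}$. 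Appending an auxiliary $n\times n$ block row/column with diagonal entry $P > 0$ and off-diagonal entries $L$, $L^\top$, and taking the Schur complement with respect to this new block, linearises the nonlinear term $-LP^{-1}L^\top$ and produces exactly the $(3n+m)\times(3n+m)$ matrix appearing on the left of \eqref{LMIstab}; the zero padding in the $\Phi$-multipliers records that the auxiliary block does not couple to $N$. The stabilizing gain $K = L P^{-1}$ is then read off from the substitution $L = KP$.

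The main obstacle is this last bookkeeping step: one must verify that the Schur-complement expansion reproduces the exact block pattern of \eqref{LMIstab} (with the correct signs and the $\beta I$ placement), and that the $\alpha$-rescaling uniformly absorbs into $P$, $L$, and $\beta$ without breaking $P > 0$ or the strictness $\beta > 0$. Once both directions of the biconditional are established in this way, the stabilizing controller formula emerges as a byproduct of the construction.
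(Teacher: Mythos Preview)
Your proposal is correct and follows precisely the route the paper indicates: Proposition~\ref{propstabnoise} is quoted from \cite{vanWaarde2020d} and the paper only remarks that it follows from the strict matrix S-lemma (Proposition~\ref{strictmatSlemma2}) together with the observation that the scalar $\alpha$ can be absorbed into $P$, $L$, and $\beta$. You have reconstructed exactly this argument, including the verification of the hypotheses $M_{22}\leq 0$, $N_{22}\leq 0$, $\ker N_{22}\subseteq\ker N_{12}$, the exclusion of $\alpha=0$, the homogeneity rescaling, and the Schur-complement lift that produces the $(3n+m)\times(3n+m)$ LMI \eqref{LMIstab}.
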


We note that the original formulation in \cite{vanWaarde2020d} involved an additional scalar variable $\alpha$. However, in the stabilization problem in Proposition~\ref{propstabnoise} this variable can be absorbed in $P,L$ and $\beta$. 

\subsection{Problem formulation}

To summarize, in the case of noise-free data, Proposition~\ref{propstabexact} gives a necessary and sufficient condition for informativity for stabilization. Moreover, in the case of noisy data, Proposition~\ref{propstabnoise} provides a necessary and sufficient condition for informativity for quadratic stabilization. 

A natural question is now the following: what is the relation between these two propositions, and can the former be obtained as a special case from the latter?  

Surprisingly, the answer to this question is far from trivial. To initiate our investigation, it is tempting to consider the noise model \eqref{asnoise} with 
\begin{equation}
\label{Phiexact}
\Phi = \begin{bmatrix}
0 & 0 \\ 0 & -I
\end{bmatrix}.
\end{equation}
Indeed, this noise model implies that $W_- W_-^\top \leq 0$, i.e., $W_- = 0$ which corresponds exactly to the case in which the data are noise-free. 

Now, a problem arises when applying Proposition~\ref{propstabnoise} to noise models of the form \eqref{Phiexact}. The reason is that for $\Phi$ in \eqref{Phiexact}, the matrix $N$ in \eqref{Nstab} is negative semidefinite. In turn, this implies that the Slater condition \eqref{matSlater} is \emph{not satisfied}. The conclusion is that Proposition~\ref{propstabnoise} does not yield a necessary and sufficient condition for quadratic stabilization in the noise-free case (note that \emph{sufficiency} of \eqref{LMIstab} does hold, regardless of the Slater condition). 

Despite this potential shortcoming of Proposition~\ref{propstabnoise}, it turns out to be possible to bridge the results for exact and noisy data in Propositions~\ref{propstabexact} and \ref{propstabnoise}. In order to understand this relation we need a new result, namely a matrix version of \emph{Finsler's lemma}.

\section{The matrix Finsler's lemma}
\label{sec:Finsler}

Essentially, informativity for stabilization (Definition~\ref{definfexact}) asks for the existence of $P$ and $K$ such that a quadratic \emph{inequality} \eqref{lyapunovineq} holds for all  $(A,B)$ satisfying the \emph{equality} defined by \eqref{Sigmaexact}. This is more than reminiscent of the classical Finsler's lemma, named after Paul Finsler who proved the result in 1936. Two versions of Finsler's lemma are known, for both strict and non-strict inequalities. We will recall both results in the following two propositions that can be found in \cite{Finsler1936,Zi-zong2010}.

\begin{proposition}[Strict Finsler's lemma]
\label{strictFinsler}
Let $M,N \!\in \mathbb{R}^{\ell \times \ell}$ be symmetric. Then $x^\top M x > 0$ for all nonzero $x\in \mathbb{R}^\ell$ satisfying $x^\top N x = 0$ if and only if there exists an $\alpha \in \mathbb{R}$ such that $M - \alpha N > 0$. 
\end{proposition}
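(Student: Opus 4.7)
The \emph{if} direction is immediate: if $M - \alpha N > 0$ and $x \neq 0$ satisfies $x^\top N x = 0$, then
$$
x^\top M x = x^\top(M - \alpha N)x + \alpha\,x^\top N x = x^\top(M - \alpha N)x > 0.
$$

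For the \emph{only if} direction, my plan is to analyze the function
$$
h(\alpha) := \lambda_{\min}(M - \alpha N) = \min_{\|x\|=1}\, x^\top(M - \alpha N)x,
$$
which is concave in $\alpha$ as a pointwise infimum of affine functions of $\alpha$, and to exhibit an $\alpha$ with $h(\alpha) > 0$. I would split the argument on the sign structure of $N$. If $N$ is positive (resp.\ negative) semidefinite, an asymptotic analysis for $\alpha \to -\infty$ (resp.\ $\alpha \to +\infty$) shows that $h(\alpha) \to +\infty$ when $N$ is non-singular, and $h(\alpha) \to c$ otherwise, where $c := \min\{x^\top M x : \|x\|=1,\; x \in \ker N\}$. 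The hypothesis applied to any nonzero $x \in \ker N$ (which automatically satisfies $x^\top N x = 0$) forces $c > 0$, so any sufficiently large $|\alpha|$ of the appropriate sign works.

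The core case is when $N$ is indefinite, in which case $h(\alpha) \to -\infty$ at both ends of $\mathbb{R}$ and concavity forces $h$ to attain its maximum at a finite $\alpha^\star$. At this maximum, $0 \in \partial h(\alpha^\star)$, which by Danskin's theorem translates into
$$
0 \in \operatorname{conv}\bigl\{-x^\top N x : x \in V^\star,\, \|x\|=1\bigr\},
$$
where $V^\star \subseteq \mathbb{R}^\ell$ is the eigenspace of $M - \alpha^\star N$ associated with its smallest eigenvalue.

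The main obstacle I expect is extracting from this inclusion a single unit vector $x^\star \in V^\star$ with $x^{\star\top} N x^\star = 0$: when $\dim V^\star = 1$ the set above reduces to a single value which must be zero, while when $\dim V^\star \geq 2$ it follows by an intermediate value argument applied to the continuous quadratic form $y \mapsto y^\top N y$ on the connected unit sphere of $V^\star$, on which it must take both signs or vanish. Once such an $x^\star$ is produced, the hypothesis yields $x^{\star\top} M x^\star > 0$, and therefore
$$
h(\alpha^\star) = x^{\star\top}(M - \alpha^\star N)x^\star = x^{\star\top} M x^\star > 0,
$$
completing the argument.
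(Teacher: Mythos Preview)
The paper does not prove Proposition~\ref{strictFinsler}; it is recalled as a classical result and attributed to \cite{Finsler1936,Zi-zong2010}. There is therefore nothing in the paper to compare your argument against.

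As for the proposal itself, the outline is sound and follows one of the standard routes to Finsler's lemma. The \emph{if} direction is immediate as you wrote. For the \emph{only if} direction, the function $h(\alpha)=\lambda_{\min}(M-\alpha N)$ is indeed concave, and your case split on the inertia of $N$ is the right organizing idea. In the semidefinite case your claim that $h(\alpha)\to c$ (with $c=\min\{x^\top M x:\|x\|=1,\ x\in\ker N\}$) is correct, but in a full proof you should justify it: take minimizers $x_k$ on the unit sphere, use boundedness of $h$ above by $c$ to force $x_k^\top N x_k\to 0$, pass to a convergent subsequence, and conclude the limit lies in $\ker N$; then $h(\alpha_k)\ge x_k^\top M x_k\to x^{*\top}Mx^*\ge c$. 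In the indefinite case, your use of the superdifferential of $h$ at its maximizer $\alpha^\star$ via Danskin's theorem is correct, and the extraction of a single $x^\star\in V^\star$ with $x^{\star\top}Nx^\star=0$ from the convex-hull condition is exactly the right move: for $\dim V^\star\ge 2$ the unit sphere of $V^\star$ is connected, so the image of $x\mapsto x^\top N x$ is an interval, equal to its own convex hull, hence contains $0$; for $\dim V^\star=1$ the set is a singleton which must then be $\{0\}$. The conclusion $h(\alpha^\star)=x^{\star\top}Mx^\star>0$ is then immediate.

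In short: the approach is correct, though a couple of steps (the limit $h(\alpha)\to c$ and the precise statement of Danskin's theorem you invoke) would benefit from being spelled out if this were to stand as a self-contained proof.
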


\begin{proposition}[Non-strict Finsler's lemma]
\label{nonstrictFinsler}
Let $M,N \in \mathbb{R}^{\ell \times \ell}$ be symmetric and assume that $N$ is indefinite. Then $x^\top M x \geq 0$ for all $x\in \mathbb{R}^\ell$ satisfying $x^\top N x = 0$ if and only if there exists an $\alpha \in \mathbb{R}$ such that $M - \alpha N \geq 0$. 
\end{proposition}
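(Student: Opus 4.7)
\emph{Proof plan.} The plan is to deduce the non-strict lemma from the strict version (Proposition~\ref{strictFinsler}) via a limiting argument. Sufficiency is immediate: if $M - \alpha N \geq 0$, then any $x$ with $x^\top N x = 0$ gives $x^\top M x = x^\top (M - \alpha N) x \geq 0$. So the substantive work lies entirely in establishing necessity.

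For necessity, assume that $x^\top M x \geq 0$ whenever $x^\top N x = 0$, and that $N$ is indefinite. For each $\epsilon > 0$, consider the perturbed matrix $M + \epsilon I$. For any nonzero $x \in \mathbb{R}^\ell$ with $x^\top N x = 0$, one has $x^\top (M + \epsilon I) x = x^\top M x + \epsilon \|x\|^2 > 0$, so the hypothesis of the strict Finsler's lemma is satisfied with $M$ replaced by $M + \epsilon I$. Proposition~\ref{strictFinsler} then furnishes a scalar $\alpha_\epsilon \in \mathbb{R}$ such that $M + \epsilon I - \alpha_\epsilon N > 0$.

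The crux of the argument is to show that the family $\{\alpha_\epsilon\}_{\epsilon > 0}$ remains bounded as $\epsilon \to 0^+$. Suppose for contradiction that $|\alpha_{\epsilon_k}| \to \infty$ along some sequence $\epsilon_k \to 0$. Dividing the inequality $M + \epsilon_k I - \alpha_{\epsilon_k} N > 0$ by $|\alpha_{\epsilon_k}|$ and passing to the limit yields either $-N \geq 0$ (if $\alpha_{\epsilon_k} \to +\infty$) or $N \geq 0$ (if $\alpha_{\epsilon_k} \to -\infty$). Either conclusion contradicts the indefiniteness of $N$, so $\{\alpha_\epsilon\}$ must be bounded.

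By Bolzano--Weierstrass we can then extract a subsequence $\alpha_{\epsilon_k} \to \alpha$ with $\epsilon_k \to 0$. Taking the limit in $M + \epsilon_k I - \alpha_{\epsilon_k} N > 0$ and using closedness of the positive semidefinite cone, we obtain $M - \alpha N \geq 0$, as required. The main obstacle in this plan is precisely the boundedness step for $\alpha_\epsilon$; the indefiniteness hypothesis on $N$ is exactly what is needed there to prevent divergence of the multiplier, which is also why a separate hypothesis of this flavor is absent from the strict version.
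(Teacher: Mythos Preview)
The paper does not actually provide its own proof of this proposition; it is simply recalled from the references \cite{Finsler1936,Zi-zong2010}. There is therefore no argument in the paper to compare against.

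Your proof is correct and follows a standard route: reduce the non-strict case to the strict one via perturbation and a compactness argument. Sufficiency is indeed immediate. For necessity, the perturbation $M \mapsto M + \epsilon I$ makes the inequality strict on the null cone of $N$, so Proposition~\ref{strictFinsler} applies and yields multipliers $\alpha_\epsilon$. The boundedness step is the only place requiring care, and you handle it correctly: dividing $M + \epsilon_k I - \alpha_{\epsilon_k} N > 0$ by $|\alpha_{\epsilon_k}|$ and letting $k \to \infty$ forces $\pm N \geq 0$, contradicting indefiniteness. (Strictly speaking, you should first pass to a further subsequence on which $\operatorname{sign}(\alpha_{\epsilon_k})$ is constant before taking the limit, but this is a routine refinement.) Bolzano--Weierstrass and closedness of the positive semidefinite cone then finish the argument.

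This perturbation-and-limit approach is the classical way to derive the non-strict version from the strict one, and it transparently shows \emph{why} the indefiniteness hypothesis on $N$ is needed in the non-strict case but not in the strict case: it is precisely what prevents the multiplier from escaping to infinity.
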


In the spirit of Finsler's lemma, we would like to find a tractable characterization of a statement of the form
$$
\begin{bmatrix} I \\ Z \end{bmatrix}^\top \! M \! \begin{bmatrix} I \\ Z \end{bmatrix} \geq 0 \:\: \forall  Z \in \mathbb{R}^{\ell \times k} \text{ such that} \begin{bmatrix} I \\ Z \end{bmatrix}^\top \! N \! \begin{bmatrix} I \\ Z \end{bmatrix} = 0,
$$
where the inequality involving $M$ is either non-strict or strict. Note that, in contrast to Finsler's lemma, this statement involves \emph{inhomogeneous} functions of \emph{matrix variables}.

To motivate our main result, we first point out some difficulties that arise when attempting to generalize Propositions~\ref{strictFinsler} and \ref{nonstrictFinsler} to the matrix case. First, the strict Finsler's lemma does not directly generalize to inhomogeneous quadratic functions, even in the vector-valued case. To convince oneself of this fact, it is sufficient to realize that 
$$
\begin{bmatrix}
1 \\ z
\end{bmatrix}^\top \!\! \begin{bmatrix}
1 & 0 \\ 0 & 0
\end{bmatrix} \! \begin{bmatrix}
1 \\ z
\end{bmatrix} \! > \! 0 \:\: \forall z \in \mathbb{R} \text{ such that } \!
\begin{bmatrix}
1 \\ z
\end{bmatrix}^\top \!\! \begin{bmatrix}
0 & 1 \\ 1 & 0
\end{bmatrix} \! \begin{bmatrix}
1 \\ z
\end{bmatrix} = 0,
$$
while the matrix 
$$
\begin{bmatrix}
1 & -\alpha \\ - \alpha & 0
\end{bmatrix}
$$
clearly cannot be positive definite. Secondly, in the non-strict case, we note that Proposition~\ref{nonstrictFinsler} requires a Slater condition as $N$ is assumed to be indefinite. This Slater condition is problematic for data-driven control since we already know that in the noise-free setting the matrix $N$ in \eqref{Nstab} is negative semidefinite. A generalization of Proposition~\ref{nonstrictFinsler}, even if possible, thus appears to be of lesser interest. 

Our solution to this is the following: we will develop a matrix Finsler's lemma for matrices $M$ and $N$ with \emph{specific structure}, and without assuming any type of Slater condition. Our main result can be formulated as follows. We will use $X^+$ to denote the Moore-Penrose pseudo-inverse of $X$.

\begin{theorem}[Matrix Finsler's lemma]
\label{t:matFinsler}
Consider symme\-tric matrices $M,N \in \mathbb{R}^{(k+\ell) \times (k+\ell)}$ partitioned as in \eqref{partitionMN}. Assume that 
\begin{enumerate}
\item $M_{12} = 0$ and $M_{22} \leq 0$. \label{as1}
\item $N_{22} \leq 0$ and $N_{11} - N_{12} N_{22}^+ N_{12}^\top = 0$. \label{as2}
\item $\exists \: G$ such that $M_{11} + G^\top M_{22} G > 0$ and $N_{22} G = N_{12}^\top$. \label{as3}
\end{enumerate}
Then we have that 
\vspace{-2pt}
\begin{equation}
\label{implicationFinsler}
\begin{bmatrix} I \\ Z \end{bmatrix}^\top \! M \! \begin{bmatrix} I \\ Z \end{bmatrix} \geq 0 \:\: \forall  Z \in \mathbb{R}^{\ell \times k} \text{ such that} \begin{bmatrix} I \\ Z \end{bmatrix}^\top \! N \! \begin{bmatrix} I \\ Z \end{bmatrix} = 0
\vspace{-2pt}
\end{equation}
if and only if there exists $\alpha \in \mathbb{R}$ such that $M - \alpha N \geq 0$.
\end{theorem}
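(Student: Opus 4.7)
The sufficiency direction is immediate: if $M - \alpha N \geq 0$, then for any admissible $Z$ one has $\begin{bmatrix} I \\ Z \end{bmatrix}^\top M \begin{bmatrix} I \\ Z \end{bmatrix} = \begin{bmatrix} I \\ Z \end{bmatrix}^\top (M - \alpha N) \begin{bmatrix} I \\ Z \end{bmatrix} + \alpha \begin{bmatrix} I \\ Z \end{bmatrix}^\top N \begin{bmatrix} I \\ Z \end{bmatrix} \geq 0$. The real content is necessity, and my plan is to reduce that implication to a canonical form via two successive congruences.

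First, I would exploit assumptions 2 and 3 to obtain the factorization $N = \begin{bmatrix} G^\top \\ I \end{bmatrix} N_{22} \begin{bmatrix} G & I \end{bmatrix}$. Indeed, $N_{22} G = N_{12}^\top$ gives $N_{12} = G^\top N_{22}$, and then $N_{11} = N_{12} N_{22}^+ N_{12}^\top = G^\top N_{22} N_{22}^+ N_{22} G = G^\top N_{22} G$ by the Moore--Penrose identity $N_{22} N_{22}^+ N_{22} = N_{22}$. The invertible congruence $T = \begin{bmatrix} I & 0 \\ -G & I \end{bmatrix}$, which corresponds to the substitution $Y = Z + G$, preserves both sides of the ``iff'' and yields $\tilde M := T^\top M T$ with $\tilde M_{11} = M_{11} + G^\top M_{22} G > 0$ by assumption 3, $\tilde M_{22} = M_{22} \leq 0$, and $\tilde N := T^\top N T = \begin{bmatrix} 0 & 0 \\ 0 & N_{22} \end{bmatrix}$. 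Since $N_{22} \leq 0$, the quadratic equality in the new variable, $Y^\top N_{22} Y = 0$, collapses to the linear condition $N_{22} Y = 0$.

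An orthogonal change of basis in $\mathbb{R}^\ell$ aligned with $\im N_{22} \oplus \ker N_{22}$ now makes $N_{22}$ block-diagonal with blocks $N_{22}^a < 0$ and $0$; partition $\tilde M_{12} = [\tilde M_{12}^a \;\; \tilde M_{12}^b]$ and $\tilde M_{22} = \begin{bmatrix} \tilde M_{22}^{aa} & \tilde M_{22}^{ab} \\ (\tilde M_{22}^{ab})^\top & \tilde M_{22}^{bb} \end{bmatrix}$ accordingly. The constraint now reads $Y_1 = 0$ with $Y_2$ free, so the hypothesis reduces to $\tilde M_{11} + \tilde M_{12}^b Y_2 + Y_2^\top (\tilde M_{12}^b)^\top + Y_2^\top \tilde M_{22}^{bb} Y_2 \geq 0$ for every $Y_2$. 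Testing rank-one perturbations $Y_2 = t v w^\top$ and letting $t \to \infty$ forces $v^\top \tilde M_{22}^{bb} v \geq 0$ for all $v$, which combined with $\tilde M_{22} \leq 0$ gives $\tilde M_{22}^{bb} = 0$. A parallel argument with $Y_2 = tS$, using that only the linear term now survives, forces $\tilde M_{12}^b = 0$, and the block Cauchy--Schwarz inequality for positive semidefinite matrices (applied to $-\tilde M_{22} \geq 0$ with a vanishing diagonal block) gives $\tilde M_{22}^{ab} = 0$.

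With these reductions $\tilde M$ has a trivial last block row and column matching that of $\tilde N$, so $\tilde M - \alpha \tilde N \geq 0$ is equivalent to $\begin{bmatrix} \tilde M_{11} & \tilde M_{12}^a \\ (\tilde M_{12}^a)^\top & \tilde M_{22}^{aa} - \alpha N_{22}^a \end{bmatrix} \geq 0$. For large $\alpha$ the $(2,2)$-block becomes positive definite, and a Schur-complement reduction turns the condition into $\tilde M_{11} - \tilde M_{12}^a \big( \tilde M_{22}^{aa} - \alpha N_{22}^a \big)^{-1} (\tilde M_{12}^a)^\top \geq 0$. The subtracted term is $O(1/\alpha)$ as $\alpha \to \infty$, so the strict positivity $\tilde M_{11} > 0$ supplied by assumption 3 is precisely what lets us pick a finite $\alpha$ that works. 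The main obstacle I expect is the coercivity step in the third paragraph: unlike classical Finsler, ruling out nontrivial $\tilde M_{22}^{bb}$ or $\tilde M_{12}^b$ in the matrix-variable setting requires carefully designed matrix-valued test directions and use of the Loewner order, rather than a one-variable polynomial argument.
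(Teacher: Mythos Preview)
Your proof is correct and follows essentially the same route as the paper's: a congruence that block-diagonalizes $N$ (you use $-G$, the paper uses $-N_{22}^+N_{12}^\top$, which agree modulo $\ker N_{22}$), rank-one test directions to force $\ker N_{22}\subseteq\ker M_{22}$, and a Schur-complement argument exploiting $M_{11}+G^\top M_{22}G>0$ to produce a sufficiently large $\alpha$. The only cosmetic differences are that the paper establishes the kernel inclusion \emph{before} the congruence and skips your explicit orthogonal diagonalization of $N_{22}$, and that your separate verification of $\tilde M_{12}^b=0$ is in fact redundant once $\tilde M_{22}^{bb}=\tilde M_{22}^{ab}=0$ (since $\tilde M_{12}=-G^\top M_{22}$).
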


\begin{proof}
The ``if" part is obvious. We thus focus on proving the ``only if" part. By Assumption \ref{as2}, the matrix $\bar{Z} := -N_{22}^+N_{12}^\top$ satisfies 
\vspace{-2pt}
$$
\begin{bmatrix}
I \\ \bar{Z}
\end{bmatrix}^\top N 
\begin{bmatrix}
I \\ \bar{Z}
\end{bmatrix} = 0.
\vspace{-2pt}
$$
Now, let $\hat{Z} := \xi \eta^\top$ where $\xi \in \ker N_{22}$ and $\eta$ is a nonzero vector. By hypothesis, we have 
\vspace{-2pt}
\begin{equation}
\label{Zhat}
\begin{bmatrix}
I \\ \bar{Z} + \gamma \hat{Z}
\end{bmatrix}^\top M \begin{bmatrix}
I \\ \bar{Z} + \gamma \hat{Z}
\end{bmatrix} \geq 0
\vspace{-2pt}
\end{equation}
for all $\gamma \in \mathbb{R}$. Recall that by Assumption \ref{as1}, $M_{22} \leq 0$. This implies that $M_{22} \hat{Z} = 0$, for otherwise there exists a sufficiently large $\gamma \in \mathbb{R}$ violating \eqref{Zhat}. We have thus proven that $\ker N_{22} \subseteq \ker M_{22}$, equivalently, $\im M_{22} \subseteq \im N_{22}$. 

Next, define the matrix 
\vspace{-2pt}
$$
T := \begin{bmatrix}
I & 0 \\ -N_{22}^+ N_{12}^\top & I
\end{bmatrix},
\vspace{-2pt}
$$
and compute 
\vspace{-2pt}
$$
T^\top N T = \begin{bmatrix}
0 & 0 \\ 0 & N_{22}
\end{bmatrix}
\vspace{-2pt}
$$
and 
\vspace{-2pt}
\begin{align*}
T^\top M T &= \begin{bmatrix}
M_{11} + N_{12} N_{22}^+ M_{22}N_{22}^+ N_{12}^\top & -N_{12} N_{22}^+ M_{22} \\
-M_{22} N_{22}^+ N_{12}^\top & M_{22}
\end{bmatrix} \\
&= \begin{bmatrix}
M_{11} + G^\top M_{22} G & -G^\top M_{22} \\
-M_{22} G & M_{22}
\end{bmatrix},
\vspace{-2pt}
\end{align*}
where for the last equality we have used Assumption \ref{as3} as follows: since $N_{12} = G^\top N_{22}$ and $\im M_{22} \subseteq \im N_{22}$ we have $N_{12} N_{22}^+ M_{22} = G^\top M_{22}$. Similarly, we conclude that $N_{12} N_{22}^+ M_{22} N_{22}^+N_{12}^\top = G^\top M_{22} G$. These computations reveal that
\vspace{-2pt}
$$
T^\top (M-\alpha N) T = \begin{bmatrix}
M_{11} + G^\top M_{22} G & -G^\top M_{22} \\
-M_{22} G & M_{22} - \alpha N_{22}
\end{bmatrix}.
\vspace{-2pt}
$$
Finally, by Assumption \ref{as3}, $M_{11} + G^\top M_{22} G > 0$ and thus it holds that $T^\top (M-\alpha N) T \geq 0$ if and only if 
\vspace{-2pt}
\begin{equation}
\label{ineqM22N22}
M_{22} - \alpha N_{22} - M_{22} G (M_{11} + G^\top M_{22} G)^{-1} G^\top M_{22} \geq 0. 
\vspace{-2pt}
\end{equation}

By Assumption \ref{as2}, $N_{22} \leq 0$ and since $\ker N_{22} \subseteq \ker M_{22}$, we conclude that there exists a sufficiently large $\alpha \in \mathbb{R}$ such that \eqref{ineqM22N22} holds. This implies that there exists an $\alpha \in \mathbb{R}$ such that $M-\alpha N \geq 0$, proving the theorem.
\end{proof}

\section{Bridging the exact and noisy cases}
\label{sec:bridge}

In this section, we will apply the matrix Finsler's lemma to find a new characterization of informativity for stabilization in the exact data case, thereby bridging the exact and noisy formulations. The result can be formulated as follows. 

\begin{theorem}
\label{t:stabexact}
Let the data $(U_-,X)$ be generated by \eqref{sys} with $\bmw = 0$. Then $(U_-,X)$ are informative for stabilization by state feedback if and only if there exist $P = P^\top > 0$ and $L$, and a scalar $\beta > 0$ satisfying
\vspace{-2pt}
\begin{equation}
\begin{bmatrix}
    P-\beta I & 0 & 0 & 0 \\
    0 & -P & -L^\top & 0 \\
    0 & -L & 0 & L \\
    0 & 0 & L^\top & P
    \end{bmatrix} + \begin{bmatrix}
    X_+ \\ -X_- \\ -U_- \\ 0
    \end{bmatrix}\!\!
    \begin{bmatrix}
    X_+ \\ -X_- \\ -U_- \\ 0
    \end{bmatrix}^\top \geq  0. \label{LMIstab2}
    \vspace{-2pt}
\end{equation}
Moreover, if $P$ and $L$ satisfy \eqref{LMIstab2} then $K := LP^{-1}$ is a stabilizing feedback gain for all $(A,B) \in \Sigma$. 
\end{theorem}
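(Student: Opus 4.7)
\emph{Proof proposal.} The plan is to establish the two directions separately. The ``if'' direction follows from Proposition \ref{propstabnoise}: \eqref{LMIstab2} coincides with \eqref{LMIstab} evaluated at the $\Phi$ of \eqref{Phiexact} (so $\Sigma_\Phi = \Sigma$), and the sufficiency part of that proposition does not use the Slater condition, so any feasible $(P, L, \beta)$ already gives that $K := L P^{-1}$ stabilises every $(A, B) \in \Sigma$---covering both informativity and the feedback-gain claim. The substance lies in the ``only if'' direction, which I would handle via the matrix Finsler's lemma (Theorem \ref{t:matFinsler}).

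For necessity, Proposition \ref{propstabexact} produces $\Theta$ satisfying \eqref{LMIexact}; with $P := X_- \Theta$, $L := U_- \Theta$, and $K := L P^{-1}$, the Lyapunov residual $F(A, B) := P - (A + BK) P (A + BK)^\top$ is positive on all of $\Sigma$. The crucial intermediate observation is that $F$ is in fact \emph{constant} on the affine set $\Sigma$: for any tangent direction $(\Delta A, \Delta B)$ with $\Delta A X_- + \Delta B U_- = 0$, the residual along the line $t \mapsto (A + t \Delta A, B + t \Delta B)$ is a quadratic in $t$ whose leading term $-t^2 (\Delta A + \Delta B K) P (\Delta A + \Delta B K)^\top$ is non-positive, so positivity on the whole unbounded line forces $\Delta A + \Delta B K = 0$. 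Consequently $F \equiv F_0 > 0$ on $\Sigma$, and any $\beta \in (0, \lambda_{\min}(F_0))$ satisfies $P - \beta I - (A + BK) P (A + BK)^\top \geq 0$ uniformly on $\Sigma$.

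I would then apply Theorem \ref{t:matFinsler} with $Z := \begin{bmatrix} A^\top \\ B^\top \end{bmatrix}$ and
\[
M := \begin{bmatrix} P - \beta I & 0 \\ 0 & -\Psi \end{bmatrix},\;\; \Psi := \begin{bmatrix} P & L^\top \\ L & L P^{-1} L^\top \end{bmatrix},\;\; N := -V V^\top,\;\; V := \begin{bmatrix} X_+ \\ -X_- \\ -U_- \end{bmatrix}.
\]
Assumption \ref{as1} holds because $\Psi \geq 0$ by Schur complement (since $P > 0$). Assumption \ref{as2} holds because the identity $X_+ = A_s X_- + B_s U_-$ for any $(A_s, B_s) \in \Sigma$ places the rows of $X_+$ in the row-span of $\begin{bmatrix} X_- \\ U_- \end{bmatrix}$, which is equivalent to $N_{11} - N_{12} N_{22}^+ N_{12}^\top = 0$. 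For Assumption \ref{as3}, the choice $G := -\begin{bmatrix} A_s^\top \\ B_s^\top \end{bmatrix}$ satisfies $N_{22} G = N_{12}^\top$, and a short expansion using $L = K P$ gives $M_{11} + G^\top M_{22} G = F_0 - \beta I > 0$. The implication \eqref{implicationFinsler} then holds by the preceding paragraph: $\begin{bmatrix} I \\ Z \end{bmatrix}^\top N \begin{bmatrix} I \\ Z \end{bmatrix} = 0$ forces $(A, B) \in \Sigma$, on which $\begin{bmatrix} I \\ Z \end{bmatrix}^\top M \begin{bmatrix} I \\ Z \end{bmatrix} = F(A, B) - \beta I \geq 0$.

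Theorem \ref{t:matFinsler} therefore yields some $\alpha \in \mathbb{R}$ with $M - \alpha N \geq 0$; inspecting its proof (``sufficiently large $\alpha$'') shows that one can take $\alpha > 0$. Because $\Psi(P/\alpha, L/\alpha) = \Psi(P, L)/\alpha$, dividing through by $\alpha$ and relabelling $(P, L, \beta) \mapsto (P/\alpha, L/\alpha, \beta/\alpha)$ absorbs $\alpha$ entirely and delivers the $(2n+m)$-dimensional inequality $\begin{bmatrix} P - \beta I & 0 \\ 0 & -\Psi \end{bmatrix} + V V^\top \geq 0$ with $P > 0$ and $\beta > 0$. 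The main remaining obstacle is to match this to \eqref{LMIstab2}: a direct block computation shows that the Schur complement of \eqref{LMIstab2} with respect to its $(4,4)$ block $P > 0$ is exactly this smaller LMI (the last block of $V$ in \eqref{LMIstab2} is zero, so the data term does not interact with the $(4,4)$ block). Hence the two inequalities are equivalent whenever $P > 0$, and \eqref{LMIstab2} follows, completing the proof.
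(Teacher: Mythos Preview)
Your proposal is correct and follows essentially the same route as the paper: for the ``only if'' direction, apply Theorem~\ref{t:matFinsler} to the same $M$ and $N$, verify its three hypotheses (with $G$ built from the true $(A_s,B_s)$), obtain $\alpha>0$, rescale, and Schur-complement back to \eqref{LMIstab2}. The only cosmetic differences are that the paper does the ``if'' direction by a direct Schur-complement computation (rather than appealing to Proposition~\ref{propstabnoise}, whose stated hypotheses include the Slater condition), and it outsources the ``$F$ is constant on $\Sigma$'' step to \cite[Lem.~15]{vanWaarde2020}, whereas you re-derive it inline via the tangent-direction argument.
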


\begin{proof}
To prove the ``if" part, suppose that \eqref{LMIstab2} is feasible and define $K := LP^{-1}$. Compute the Schur complement of \eqref{LMIstab2} with respect to the fourth row and column block, which yields
\vspace{-2pt}
\begin{equation}
\label{intermediate}
\begin{bmatrix}
    P-\beta I & 0 & 0 \\
    0 & -P & -PK^\top \\
    0 & -KP & -KPK \\
    \end{bmatrix}
   + \begin{bmatrix}
    X_+ \\ -X_- \\ -U_- 
    \end{bmatrix}\!\!
    \begin{bmatrix}
    X_+ \\ -X_- \\ -U_- 
    \end{bmatrix}^\top \geq  0.
    \vspace{-2pt}
\end{equation}
Finally, for any $(A,B) \in \Sigma$, multiply \eqref{intermediate} from the left by $\begin{bmatrix}
I & A & B
\end{bmatrix}$ and from right by its transposed. This results in
\vspace{-2pt}
\begin{equation}
\label{lyapproof}
P - (A+BK)P(A+BK)^\top \geq \beta I > 0,
\vspace{-2pt}
\end{equation}
proving that $A+BK$ is Schur stable. Thus, the data $(U_-,X)$ are informative for stabilization by state feedback and $K = LP^{-1}$ is a stabilizing controller for all $(A,B) \in \Sigma$. 

Next, to prove the ``only if" part, suppose that the data $(U_-,X)$ are informative for stabilization by state feedback. Then there exists a controller $K$ such that $A+BK$ is Schur for all $(A,B) \in \Sigma$. By \cite[Lem. 15]{vanWaarde2020} there exist $P = P^\top > 0$ and $\beta > 0$ such that \eqref{lyapproof} holds for all $(A,B) \in \Sigma$. Define the partitioned matrices 
\vspace{-2pt}
\begin{align*}
    M &= \begin{pmat}[{|}]
M_{11} & M_{12} \cr\-
M_{12}^\top & M_{22} \cr
\end{pmat} := \begin{pmat}[{|.}]
    P-\beta I & 0 & 0 \cr\-
    0 & -P & -PK^\top \cr
    0 & -KP & -KPK^\top \cr
    \end{pmat} \\
N &= \begin{pmat}[{|}]
N_{11} & N_{12} \cr\- N_{12}^\top & N_{22} \cr
\end{pmat} := -\begin{pmat}[{.}]
    X_+ \cr\- -X_- \cr -U_- \cr
    \end{pmat}
    \begin{pmat}[{.}]
    X_+ \cr\- -X_- \cr -U_- \cr
    \end{pmat}^\top. 
    \vspace{-2pt}
\end{align*}
For these matrices, the statement in \eqref{implicationFinsler} holds. In addition, note that $M_{12} = 0$ and $M_{22} \leq 0$, thus Assumption 1) of Theorem~\ref{t:matFinsler} holds. Similarly, $N_{22} \leq 0$, and $N_{11} - N_{12} N_{22}^+ N_{12}^\top$ equals
\vspace{-2pt}
\begin{align*}
X_+ \left(-I + \begin{bmatrix}
X_- \\ U_-
\end{bmatrix}^\top \left( \begin{bmatrix}
X_- \\ U_-
\end{bmatrix}\begin{bmatrix}
X_- \\ U_-
\end{bmatrix}^\top \right)^+ \begin{bmatrix}
X_- \\ U_-
\end{bmatrix}\right) X_+^\top,
\vspace{-2pt}
\end{align*}
which is zero since $\im X_+^\top \subseteq \im \begin{bmatrix}
X_-^\top & U_-^\top 
\end{bmatrix}$ by hypothesis. Therefore, Assumption 2) of Theorem~\ref{t:matFinsler} is also satisfied. Finally, note that $G:= \begin{bmatrix}
A_s & B_s
\end{bmatrix}^\top$ satisfies Assumption 3). We conclude by Theorem~\ref{t:matFinsler} that there exists $\alpha \in \mathbb{R}$ such that $M-\alpha N \geq 0$. In fact, we necessarily have $\alpha > 0$ since $-P < 0$. We can thus assume without loss that $\alpha = 1$ (as $P$ and $\beta$ can be scaled by $1/\alpha$). Finally, by defining $L = KP$ and using a Schur complement argument we conclude that \eqref{LMIstab2} is feasible. 
\end{proof}

Theorem~\ref{t:stabexact} bridges the exact and noisy case in the following sense. Note that the matrix on the right of \eqref{LMIstab2} equals
\vspace{-2pt}
$$
- \begin{bmatrix}
    I & X_+ \\ 0 & -X_- \\ 0 & -U_- \\ 0 & 0
    \end{bmatrix}
    \begin{bmatrix}
    0 & 0 \\
    0 & -I
    \end{bmatrix}
    \begin{bmatrix}
    I & X_+ \\ 0 & -X_- \\ 0 & -U_- \\ 0 & 0
    \end{bmatrix}^\top,
\vspace{-2pt}
$$
which is nothing but a special case of the matrix on the right of \eqref{LMIstab} for the choices $\Phi_{11} = 0$, $\Phi_{12} = 0$ and $\Phi_{22} = -I$. This means that feasibility of the LMI \eqref{LMIstab} (with specific $\Phi$) is also necessary and sufficient for informativity for stabilization in the case of exact data. In this case, we even know that the assumption of a common Lyapunov function is not restrictive, i.e., informativity for stabilization is equivalent for informativity for quadratic stabilization if $\bmw = 0$. This follows directly from \cite[Lem. 15]{vanWaarde2020}.

Given the two equivalent conditions in Proposition~\ref{propstabexact} and Theorem~\ref{t:stabexact} it is natural to question the relative merits of both approaches. First of all, we note that the LMI conditions in \eqref{LMIexact} and \eqref{LMIstab2} are different in nature: the variable $\Theta$ in \eqref{LMIexact} has dimension $T \times n$ which depends on the time horizon of the experiment, while the dimensions of the variables $P,L$ and $\beta$ in \eqref{LMIstab2} are independent of $T$. From a computational point of view, Theorem~\ref{t:stabexact} may thus be preferred in cases where the inputs of the experiment are chosen to be \emph{persistently exciting} \cite{Willems2005,vanWaarde2020c} since this puts a lower bound $T \geq n+m+nm$ on the required number of samples. On the other hand, it has recently been shown \cite{vanWaarde2021} that for controllable pairs $(A_s,B_s)$, the data $(U_-,X)$ can be made informative for stabilization with at most $T = n+m$ samples, using an online input design method. In this case, the LMI \eqref{LMIexact} may be preferred since \eqref{LMIexact} has dimension $2n \times 2n$ which is smaller than the dimension $(3n+m) \times (3n+m)$ of \eqref{LMIstab2}.  

\section{Data-driven stabilization of Lur'e systems}
\label{sec:Lure}
In this section, we will apply the matrix Finsler's lemma to control Lur'e systems. First, we will explain the classical problem of absolute stability for such systems. Consider the Lur'e system
\vspace{-2pt}
\begin{equation}
\begin{aligned}
\label{Lure}
\bmx(t+1) = A\bmx(t) + B \bmu(t) + E\phi(C\bmx(t)) 
\end{aligned}
\vspace{-2pt}
\end{equation} 
where $\bmx \in \mathbb{R}^n$ is the state, $\bmu \in \mathbb{R}^m$ is the input and  $\phi : \mathbb{R} \to \mathbb{R}$ is a (nonlinear) function satisfying the sector condition
\vspace{-2pt}
\begin{equation}
\label{sector}
\phi(y) (\phi(y)-y) \leq 0 \quad \forall y \in \mathbb{R}.
\vspace{-2pt}
\end{equation}
The real matrices $A, B, E$ and $C$ are of appropriate dimensions. Suppose that we apply a state feedback controller $\bmu = K \bmx$ resulting in 
\vspace{-2pt}
\begin{equation}
\label{Lureclosed}
\bmx(t+1) = (A+BK)\bmx(t) + E\phi(C\bmx(t)). 
\vspace{-2pt}
\end{equation}
For systems of the form \eqref{Lureclosed}, a problem with a rich history is that of \emph{absolute stability}, i.e. global asymptotic stability of $0$ \emph{for all} sector-bounded nonlinearities, c.f. \cite{Boyd1994} for references. We focus on proving absolute stability of \eqref{Lureclosed} by means of a quadratic Lyapunov function $V(z) := z^\top P z$ where $P = P^\top > 0$. We thus want that $V(x(t+1)) < V(x(t))$ for all sector-bounded nonlinearities $\phi$ and all nonzero $x(t)$ and resulting $x(t+1)$ satisfying \eqref{Lureclosed}. We will mimic the continuous-time setting of \cite[Ch. 5]{Boyd1994}. Let $A_K := A+BK$. Then we require
\vspace{-2pt}
$$
 (A_Kx+Ew)^\top P (A_Kx+Ew) - x^\top P x < 0
 \vspace{-2pt}
$$
for all $w \in \mathbb{R}$ and nonzero $x \in \mathbb{R}^n$ satisfying $w (w-Cx) \leq 0$. Equivalently, 
\vspace{-2pt}
\begin{equation}
\label{ineq1}
\begin{bmatrix}
x \\ w
\end{bmatrix}^\top 
\begin{bmatrix}
P - A_K^\top P A_K & -A_K^\top P E \\ -E^\top P A_K & -E^\top P E 
\end{bmatrix}
\begin{bmatrix}
	x \\ w
\end{bmatrix} > 0
\vspace{-2pt}
\end{equation}
for all $w \in \mathbb{R}$ and nonzero $x \in \mathbb{R}^n$ satisfying
\vspace{-2pt}
\begin{equation}
\label{ineq2}
\begin{bmatrix}
x \\ w
\end{bmatrix}^\top 
\begin{bmatrix}
0 & \frac{1}{2}C^\top \\ \frac{1}{2} C & -1
\end{bmatrix}
\begin{bmatrix}
	x \\ w
\end{bmatrix} \geq 0.
\vspace{-2pt}
\end{equation}
Since \eqref{ineq2} is not satisfied when $x = 0$ and $w \neq 0$, the latter statement is equivalent to \eqref{ineq1} being satisfied for all nonzero $(x,w)$ satisfying \eqref{ineq2}. Assuming $C \neq 0$, the inequality \eqref{ineq2} is strictly feasible. Thus, by the S-lemma \cite[p. 24]{Boyd1994} we conclude that \eqref{ineq1} is satisfied for all nonzero $(x,w)$ satisfying \eqref{ineq2} if and only if 
\vspace{-2pt}
\begin{equation}
\label{ineqPalpha}
\begin{bmatrix}
P - A_K^\top P A_K & -A_K^\top P E \\ -E^\top P A_K & -E^\top P E 
\end{bmatrix} - \alpha \begin{bmatrix}
0 & \frac{1}{2}C^\top \\ \frac{1}{2} C & -1
\end{bmatrix} > 0
\vspace{-2pt}
\end{equation}
for some scalar $\alpha \geq 0$. Proving absolute stability of \eqref{Lureclosed} by a quadratic Lyapunov function thus boils down to finding $P = P^\top > 0$ and $\alpha \geq 0$ such that \eqref{ineqPalpha} holds. By homogeneity, we can even get rid of $\alpha$ and look for $P = P^\top > 0$ satisfying 
\vspace{-2pt}
\begin{equation}
\label{LMIabsstab}
\begin{bmatrix}
P - A_K^\top P A_K & -A_K^\top P E-\frac{1}{2}C^\top \\ -E^\top P A_K-\frac{1}{2}C & 1-E^\top P E 
\end{bmatrix} > 0.
\vspace{-2pt}
\end{equation} 

\subsection{Data-driven stabilization}
 Next, we consider the system  
 \vspace{-2pt}
\begin{equation}
\label{Lure2}
\bmx(t+1) = A_s\bmx(t) + B_s \bmu(t) + E_s \phi(C\bmx(t))
\vspace{-2pt}
\end{equation}
where $A_s, B_s$ and $E_s$ are unknown but the matrix $C$ is known\footnote{This assumption can be replaced by measurements of $\bmy(t) := C \bmx(t)$.}. We aim at constructing an absolutely stabilizing controller $\bmu = K\bmx$ on the basis of measurements $X$ and $U_-$ as in \eqref{XUmdata} and
\vspace{-2pt}
\begin{equation}
\label{Wdata}
\begin{aligned}
W_-\! &= \! \begin{bmatrix}
\phi(Cx(0)) \!\! & \phi(Cx(1)) & \!\!\! \cdots \!\!\! & \phi(Cx(T-1)) 
\end{bmatrix}.
\end{aligned}
\vspace{-2pt}
\end{equation}  
If we define $X_+$ and $X_-$ as in \eqref{XpXm} then all systems $(A,B,E)$ explaining the data are given by the set $\Sigma$ defined by 
\vspace{-2pt}
$$
\Sigma := \{(A,B,E) \mid X_+ = A X_- + B U_- + E W_- \}.
\vspace{-2pt}
$$
\begin{definition}
Suppose that the data $(U_-,W_-,X)$ in \eqref{XUmdata} and \eqref{Wdata} have been generated by \eqref{Lure2}. Then $(U_-,W_-,X)$ are called \emph{informative for absolute quadratic stabilization} if there exist  $P = P^\top > 0$ and $K$ such that \eqref{LMIabsstab} holds for all $(A,B,E) \in \Sigma$. 
\end{definition}

\begin{theorem}
The data $(U_-,W_-,X)$ are informative for absolute quadratic stabilization if and only if there exist matrices $Q = Q^\top > 0$ and $L$ and scalars $\alpha \in \mathbb{R}$ and $\beta > 0$ such that $CQC^\top < 4$ and
\vspace{-2pt}
\begin{equation}
\begingroup
\setlength\arraycolsep{1.8pt}
\begin{bmatrix}
    \!Q\!-\!\beta I & 0 & 0 & 0 & 0 &  0 \\
    0 & 0 & 0 & 0 & Q & 0 \\
    0 & 0 & 0 & 0 & L & 0 \\
    0 & 0 & 0 & 0 & 0 & 1 \\
    0 & Q & L^\top & 0 & Q & -\frac{1}{2}QC^\top\! \\
    0 & 0 & 0 & 1 & -\frac{1}{2}CQ & 1
    \end{bmatrix}
    \endgroup \!+\alpha\!\begin{bmatrix}
    X_+ \\ -X_- \\ -U_- \\ \!-W_- \\ 0 \\ 0
    \end{bmatrix}\!\!\!
    \begingroup
\setlength\arraycolsep{1.8pt}
    \begin{bmatrix}
    X_+ \\ -X_- \\ -U_- \\ \!-W_- \\ 0 \\ 0
    \end{bmatrix}^\top \endgroup \!\!\!\!\!\! \geq \! 0. \label{LMIabsstabdata}
    \vspace{-2pt}
\end{equation}
\newpage 
In this case, $K := LQ^{-1}$ is such that \eqref{Lureclosed} is absolutely stable for all $(A,B,E) \in \Sigma$. 
\end{theorem}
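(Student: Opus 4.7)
Proof plan.

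This is the Lur'e-system analogue of Theorem~\ref{t:stabexact}, and the proof follows the same template: rewrite the stability requirement (here, absolute stability via a quadratic Lyapunov function) as a quadratic matrix inequality in $Z := [A,B,E]^\top$, apply the matrix Finsler's lemma (Theorem~\ref{t:matFinsler}), and combine with a Schur-complement lift. The new ingredient is the sector-condition block $S := \begin{bmatrix} Q & -\frac{1}{2}QC^\top \\ -\frac{1}{2}CQ & 1 \end{bmatrix}$, which must be positive definite (equivalently $CQC^\top < 4$) and which plays the role of the auxiliary lifting block, analogous to the $P$-block in \eqref{LMIstab2}.

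For the ``if'' direction, I would start from a feasible $(Q,L,\alpha,\beta)$ with $CQC^\top < 4$ and take the Schur complement of \eqref{LMIabsstabdata} with respect to the bottom-right $(n+1)\times(n+1)$ block, which equals $S > 0$ and is untouched by the data term (the last two entries of the data column are zero). Left- and right-multiplying the resulting $(2n+m+1)\times(2n+m+1)$ inequality by $V := \begin{bmatrix} I & A & B & E \end{bmatrix}$ (partitioned to match the first four block sizes $n, n, m, 1$) and $V^\top$ kills the data term, since for $(A,B,E) \in \Sigma$ one has $X_+ - AX_- - BU_- - EW_- = 0$. The result is $Q - \beta I \geq [AQ+BL,\,E]\,S^{-1}\,[AQ+BL,\,E]^\top$; reversing two Schur complements and applying the congruence with $\diag(Q^{-1},1)$ recovers exactly \eqref{LMIabsstab} with $P = Q^{-1}$ and $K = LQ^{-1}$, valid for every $(A,B,E) \in \Sigma$.

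For the ``only if'' direction, let $(P,K)$ witness informativity and set $Q = P^{-1}$, $L = KQ$. A congruence and a Schur-complement lift show that \eqref{LMIabsstab} is equivalent to a lifted $(2n+1)\times(2n+1)$ LMI whose bottom-right block is $Q$ and whose top-left $(n+1)\times(n+1)$ block is $S$, so $S > 0$, i.e., $CQC^\top < 4$. A Lur'e analogue of \cite[Lem.~15]{vanWaarde2020} then supplies a uniform slack $\beta > 0$, and Schur-complementing out $S$ yields $\begin{bmatrix} I \\ Z \end{bmatrix}^\top M \begin{bmatrix} I \\ Z \end{bmatrix} \geq 0$ for all $(A,B,E) \in \Sigma$, with $M := \diag(Q-\beta I,\,-F S^{-1} F^\top)$, $F := \begin{bmatrix} Q & 0 \\ L & 0 \\ 0 & 1 \end{bmatrix}$, and $Z = [A,B,E]^\top$. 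Setting $N := -\begin{bmatrix} X_+ \\ -X_- \\ -U_- \\ -W_- \end{bmatrix}\begin{bmatrix} X_+ \\ -X_- \\ -U_- \\ -W_- \end{bmatrix}^\top$ encodes $(A,B,E) \in \Sigma$ as $\begin{bmatrix} I \\ Z \end{bmatrix}^\top N \begin{bmatrix} I \\ Z \end{bmatrix} = 0$, and the three hypotheses of Theorem~\ref{t:matFinsler} are easily checked: $M_{12}=0$ and $M_{22}\leq 0$ from $S>0$; $N_{11} = N_{12} N_{22}^+ N_{12}^\top$ from $\im X_+^\top \subseteq \im [X_-^\top, U_-^\top, W_-^\top]$; and $G := -[A_s, B_s, E_s]^\top$ witnesses the third assumption via $X_+ = A_s X_- + B_s U_- + E_s W_-$ and the strict stability at the true system. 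Theorem~\ref{t:matFinsler} then produces $\alpha \in \mathbb{R}$ with $M - \alpha N \geq 0$, and ``unlifting'' $M$ (reintroducing $S$ as the bottom-right block via an inverse Schur complement and padding the data term with zeros on these blocks) converts this into \eqref{LMIabsstabdata}. The main delicate step is the uniform-slack Lemma~15-type argument: strict stability holds on the unbounded affine set $\Sigma$, and extracting a $\beta > 0$ so that the non-strict form of Theorem~\ref{t:matFinsler} applies is the technical crux.
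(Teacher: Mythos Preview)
Your proposal is correct and follows essentially the same route as the paper: Schur-complement out the $(n+1)\times(n+1)$ block $S$, apply the matrix Finsler's lemma (Theorem~\ref{t:matFinsler}) to the resulting $M$ and the data-Gramian $N$, and then unlift. The step you flag as the technical crux is in fact simpler than you suggest: the paper's Lemma~15 analogue shows that $A+BK$ and $E$ are \emph{constant} on $\Sigma$, so the strict inequality you obtain is a single fixed matrix inequality (not a family over an unbounded set), and extracting a uniform $\beta>0$ is immediate.
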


\begin{proof}
We first prove the ``if" part. Note that the lower right $2$ by $2$ block matrix of \eqref{LMIabsstabdata} is positive definite since $Q > 0$ and $CQC^\top < 4$. Define $P := Q^{-1}$ and $K:=LQ^{-1}$, and let $(A,B,E) \in \Sigma$. Multiply \eqref{LMIabsstabdata} from both sides by the block diagonal matrix with blocks $I, I,1, 1, P$ and $1$. Then take the Schur complement of \eqref{LMIabsstabdata} with respect to the lower right $2$ by $2$ block, and multiply with $\begin{bmatrix} I & A & B & E \end{bmatrix}$ from left and its transposed from right to obtain
\vspace{-2pt}
\begin{equation}
\label{strictineq}
\begingroup
\setlength\arraycolsep{1.8pt}
\begin{bmatrix}
I \\ A^\top \\ B^\top \\ E^\top
\end{bmatrix}^{\!\!\top} \!\!\!
\begin{bmatrix}
    P^{-1} & 0 \\
    0 & \!-\!\!\begin{bmatrix}
    I & 0 \\ K & 0 \\ 0 & I
    \end{bmatrix} \!\!\!
    \begin{bmatrix}
    P & -\frac{1}{2}C^\top \\
    -\frac{1}{2}C & 1
    \end{bmatrix}^{-1} \!\!
    \begin{bmatrix}
    I & 0 \\ K & 0 \\ 0 & I
    \end{bmatrix}^{\!\!\top}\! 
    \end{bmatrix} \!\!\!
    \begin{bmatrix}
I \\ A^\top \\ B^\top \\ E^\top
\end{bmatrix}
     \!>\! 0
     \endgroup
     \vspace{-2pt}
\end{equation}
Finally, by using a Schur complement argument twice, we see that \eqref{strictineq} implies \eqref{LMIabsstab}. Therefore the data are informative for absolute quadratic stabilization and $K$ is a suitable controller with Lyapunov matrix $Q^{-1}$. 

To prove the ``only if" part, suppose that there exist $P = P^\top > 0$ and $K$ such that \eqref{LMIabsstab} holds for all $(A,B,E) \in \Sigma$. Using a Schur complement argument twice this implies \eqref{strictineq} holds. Analogous to \cite[Lem. 15]{vanWaarde2020} it can be shown that $A+BK$ and $E$ are the same for all $(A,B,E) \in \Sigma$. This implies that \eqref{strictineq} still holds for all $(A,B,E) \in \Sigma$ if we replace the strict inequality by a non-strict inequality and $P^{-1}$ by $P^{-1} - \beta I$ for some sufficienctly small $\beta > 0$. Define
\vspace{-2pt} 
\begin{align*}
M &:= \! \begin{pmat}[{|}]
    \!P^{-1}\!-\!\beta I\! & 0 \cr\-
    0 & {-\!\!\begin{bmatrix}
    I & 0 \\ K & 0 \\ 0 & I
    \end{bmatrix}\!\!
    \begin{bmatrix}
    P & -\frac{1}{2}C^\top \\
    -\frac{1}{2}C & 1
    \end{bmatrix}^{-1} \!\!
    \begin{bmatrix}
    I & 0 \\ K & 0 \\ 0 & I
    \end{bmatrix}^{\!\!\top}} \cr
    \end{pmat} \\
    \vspace{-2pt}
    N &:= -\begin{pmat}[{.}]
    X_+ \cr\- -X_- \cr -U_- \cr -W_- \cr
    \end{pmat} \!\!
    \begin{pmat}[{.}]
    X_+ \cr\- -X_- \cr -U_- \cr -W_- \cr
    \end{pmat}^{\!\top}.
    \vspace{-4pt}
\end{align*}
By Theorem~\ref{t:matFinsler}, we conclude that there exists an $\alpha \in \mathbb{R}$ such that $M-\alpha N \geq 0$. Finally, by defining the variables $Q := P^{-1}$ and $L := KQ$ and using a Schur complement argument, we see that $CQC^\top < 4$ and \eqref{LMIabsstabdata} is feasible.
\end{proof}

\vspace{-4pt}

\section*{Acknowledgement}
We would like to thank Bart Besselink for discussions on the topic of Section~\ref{sec:Lure} of this paper.

\vspace{-4pt}

\bibliographystyle{IEEEtran}
\bibliography{references}

\end{document}